\newtheorem{theorem}{Theorem}[section]
\newtheorem{lemma}[theorem]{Lemma}
\theoremstyle{definition}
\theoremstyle{remark}
\numberwithin{equation}{section}
\begin{document}

\title{A note on complete hyperbolic structures on ideal triangulated 3-manifolds }

\author{Feng Luo}
\address{Department of Mathematics, Rutgers University, New Brunswick, New Jersey 08854}
\email{fluo@math.rutgers.edu }
\thanks{The work is supported in part by a NSF Grant.}


\subjclass{Primary 54C40, 14E20; Secondary 46E25, 20C20}
\date{Oct. 1, 2010.}

\dedicatory{Dedicated to Bus Jaco on the occasion of his 70th
birthday}

\keywords{hyperbolic metric, 3-manifolds, tetrahedra, volume}

\begin{abstract}
It is a theorem of Casson and Rivin that the complete hyperbolic
metric on a cusp end ideal triangulated 3-manifold maximizes
volume in the space of all positive angle structures. We show that
the conclusion still holds if some of the tetrahedra in the
complete metric are flat.
\end{abstract}

\maketitle

\section{Introduction}
\subsection{    }
Epstein and Penner \cite{EP} proved that a non-compact finite
volume complete hyperbolic 3-manifold has a geodesic triangulation
in which each 3-simplex is a (possibly flat) ideal hyperbolic
tetrahedron. Here a flat ideal hyperbolic tetrahedron is a
tetrahedron with all dihedral angles being $0$ and $\pi$. The
purpose of this paper is to show that a geodesic ideal
triangulation of a complete hyperbolic 3-manifolds with some flat
tetrahedra  maximizes the volume in the closure of the space of
non-negative angle structures.  In the case all tetrahedra are
non-flat, this was proved by Casson and Rivin (see \cite{FG},
\cite{Ch} for a written proof). We remark that the corresponding
result also holds for hyper-ideal geodesic triangulations of
compact hyperbolic 3-manifolds with totally geodesic boundary.
This will be addressed in another paper.

\subsection{ } Recall that a triangulated closed pseudo
3-manifold $(M^*, T^*)$ is the quotient of a disjoint union of
tetrahedra so that co-dimension-1 faces are identified in pairs by
affine homeomorphisms. The simplices in the triangulation $T^*$
are the quotients of simplices in the disjoint union. If we remove
all vertices of $T^*$ from $M^*$, the result is an \it ideal
triangulated non-compact 3-manifold $(M, \bold T)$\rm.  We say $M$
has \it cusp ends \rm if the link of each vertex in $T^*$ is a
torus or a Kleinbottle.  We will deal with ideal triangulated cusp
end manifolds  $(M, \bold T)$  in this paper.  An \it angle
structure \rm on $(M, \bold T)$, introduced by Casson, Rivin and
Lackenby (\cite{La}), assigns each edge of each tetrahedron a
positive number, called the angle, so that

(1) the sum of three angles at edges from each vertex of each
tetrahedron is $\pi$, and

(2) the sum of angles around each edge is $2\pi$.

An \it angled tetrahedron \rm is a tetrahedron so that each edge has
assigned a positive number called the angle so that condition (1)
above holds. Given an angled tetrahedron, there is a unique ideal
hyperbolic tetrahedron, call the \it geometric realization\rm,
whose dihedral angles are the assigned angles.  The volume of an
angled tetrahedron is defined to be the volume of its geometric
realization. The volume of an angle structure is the sum of the
volume of its angled tetrahedra.

For an ideal triangulated 3-manifold $(M, \bold T)$ with $n$
tetrahedra, let $\bold A(\bold T) \subset \bold R^{6n}$ be the
space of all angle structures on $\bold T$ and let $vol: \bold
A(\bold T) \to \bold R$ be the volume function. By the
Lobachevsky-Milnor's formula for volume, the volume function
extends continuously to $vol: \overline{\bold A(T)} \to \bold R$
where $\overline{\bold A(\bold T)}$ is the (compact) closure of
$\bold A(\bold T)$ in $\bold R^{6n}$.

\begin{theorem}[Casson-Rivin]
For an ideal triangulated cusped 3-manifold $(M, \bold T)$ so that
$\bold A(\bold T) \neq \emptyset$, a point $p \in \bold A(\bold T)$
corresponds to a complete hyperbolic structure on $M$ if and only
if $p$ is the maximum point of the volume function $vol: \bold
A(\bold T) \to \bold R$.
\end{theorem}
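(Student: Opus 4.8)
The plan is to combine a convexity statement with a Lagrange--multiplier computation. First I would show that the volume function is \emph{strictly concave} on the convex set $\bold A(\bold T)$; this at once implies that $vol$ has at most one critical point and that a critical point, if it exists, is the unique global maximum over $\bold A(\bold T)$. Second I would show that a point $p\in\bold A(\bold T)$ is a critical point of $vol$ if and only if the geometric realizations of the angled tetrahedra of $p$ glue by isometries to a complete hyperbolic structure on $M$. The theorem then follows: since $\bold A(\bold T)\neq\emptyset$, the continuous function $vol$ attains its maximum on the compact set $\overline{\bold A(\bold T)}$; since $\Lambda'(\theta)=-\log(2\sin\theta)\to+\infty$ as $\theta\to0^+$ (and as $\theta\to\pi^-$), where $\Lambda$ is the Lobachevsky function, moving along a segment from any point of $\overline{\bold A(\bold T)}\setminus\bold A(\bold T)$ toward a point of $\bold A(\bold T)$ strictly increases $vol$, so the maximizer lies in the relatively open set $\bold A(\bold T)$ and is therefore a critical point; hence ``$p$ is the maximum of $vol|_{\bold A(\bold T)}$'' is equivalent to ``$p$ is a critical point'', which by the second step is equivalent to ``$p$ is complete''.

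For the concavity, write $vol=\sum_t V_t$ with $V_t=\Lambda(\alpha_t)+\Lambda(\beta_t)+\Lambda(\gamma_t)$, where $\alpha_t,\beta_t,\gamma_t$ are the dihedral angles at the three pairs of opposite edges of the $t$-th tetrahedron, subject to $\alpha_t+\beta_t+\gamma_t=\pi$. Since $\bold A(\bold T)$ is a convex subset of the affine subspace cut out by all equations (1) and (2), it suffices that each $V_t$ be strictly concave on the $2$-simplex $\{\alpha+\beta+\gamma=\pi,\ \alpha,\beta,\gamma>0\}$. Eliminating $\gamma$ and using $\Lambda''(\theta)=-\cot\theta$, the Hessian of $V_t$ in the coordinates $(\alpha,\beta)$ has diagonal entries $-\cot\alpha-\cot\gamma$ and $-\cot\beta-\cot\gamma$ and off-diagonal entry $-\cot\gamma$; each diagonal entry is $-\sin\beta/(\sin\alpha\sin\gamma)<0$ (respectively $-\sin\alpha/(\sin\beta\sin\gamma)<0$), and the determinant equals $\cot\alpha\cot\beta+\cot\beta\cot\gamma+\cot\gamma\cot\alpha$, which is $1$ because $\alpha+\beta+\gamma=\pi$. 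Hence the Hessian is negative definite, so $V_t$ and $vol$ are strictly concave.

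For the critical-point characterization, attach to $p\in\bold A(\bold T)$ Thurston's complex shape parameters of the geometric realizations, normalized so that the edge carrying dihedral angle $\alpha_t$ gets the shape $z$ with $\arg z=\alpha_t$ (and $|z|$ the appropriate ratio of sines of the other two angles of $t$), and the standard identities $zz'z''=-1$, $z'=1/(1-z)$ hold automatically. Condition (2) says exactly that the arguments in each of Thurston's edge gluing equations sum to $2\pi$, so $p$ is complete precisely when, in addition, (a) the moduli in each edge gluing equation multiply to $1$ and (b) the similarity structures $p$ induces on the cusp cross-sections are Euclidean---which, since a Euclidean structure on a torus or Klein bottle is automatically complete, is the same as completeness at the cusps. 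Now $\nabla vol$ has component $\Lambda'(\theta_f^t)=-\log(2\sin\theta_f^t)$ at the edge $f$ of the tetrahedron $t$, and criticality of $p$ means $\nabla vol(p)$ lies in the span of the gradients of the linear equations (1) and (2) defining the affine hull of $\bold A(\bold T)$; introducing Lagrange multipliers dual to (1) and (2) (equivalently, testing $d(vol)$ against the leading--trailing edge deformations and the cusp-shearing deformations) and exponentiating, this condition unpacks precisely to the system (a)--(b). I expect this last unpacking to be the main obstacle: the index bookkeeping over the edges of the tetrahedra and the multiplicities of the edge identifications, and the verification that the exponentiated Lagrange equations are literally the gluing equations (a) together with the Euclidean-cusp equations (b)---which requires tracking how the link triangles of $p$ tile the cusp surfaces---is the technical core; the concavity computation and the assembly of the equivalences above are routine.
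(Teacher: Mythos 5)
First, note that the paper does not actually prove this statement: Theorem 1.1 is quoted as Casson--Rivin's result and the reader is referred to \cite{FG} and \cite{Ch} for written proofs; the paper's own argument in \S 2 only \emph{uses} Theorem 1.1 to dispose of the case $p\in\bold A(\bold T)$ before turning to boundary points. Your overall strategy --- Rivin's strict concavity plus the identification of critical points of $vol$ with complete structures via Lagrange multipliers / leading--trailing deformations --- is the standard route taken in \cite{FG}, and your Hessian computation (negative diagonal entries and determinant $\cot\alpha\cot\beta+\cot\beta\cot\gamma+\cot\gamma\cot\alpha=1$) is correct. The serious gap is that the step you explicitly defer as ``the technical core,'' namely unpacking the criticality condition into the edge gluing equations together with the Euclidean-cusp (completeness) equations, \emph{is} the content of the theorem; the concavity and the logical assembly around it are the easy part. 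A write-up that postpones exactly this bookkeeping has not proved Casson--Rivin.

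There is also an outright false intermediate claim: that whenever $\bold A(\bold T)\neq\emptyset$ the maximizer of $vol$ over the compact set $\overline{\bold A(\bold T)}$ must lie in the relatively open set $\bold A(\bold T)$. The paper itself records counterexamples (Futer--Gu\'eritaud, and Luo--Zheng) with $\bold A(\bold T)\neq\emptyset$ in which the maximum over $\overline{\bold A(\bold T)}$ sits on the boundary and does not correspond to a complete metric. Your derivative argument fails at a boundary point containing a flat tetrahedron with angles $(0,0,\pi)$: moving inward, two angles leave $0$ (where $\Lambda'\to+\infty$) while one leaves $\pi$ (where $\Lambda'\to+\infty$ as well but enters the directional derivative with the opposite sign), and the two logarithmic divergences cancel, leaving a \emph{finite} one-sided derivative of the entropy form $(a+b)\ln(a+b)-a\ln a-b\ln b$ plus finite contributions from the non-flat tetrahedra, which in total can be nonpositive --- this cancellation is precisely what Lemmas 2.3 and 2.7 of the paper quantify. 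Fortunately this step is not needed for the theorem as stated: a maximum point of $vol|_{\bold A(\bold T)}$ lying in $\bold A(\bold T)$ is automatically a critical point because $\bold A(\bold T)$ is relatively open in its affine hull, and conversely a critical point is the unique maximum by strict concavity; so you should delete the boundary argument rather than try to repair it.
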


Since the space $\bold A(\bold T)$ is non-compact, there is no guarantee
that the maximum point of $vol$ exists.  Our main theorem
generalizes theorem 1.1 in one direction.

\begin{theorem}
For an ideal triangulated cusp end 3-manifold $(M, \bold T)$ so
that $\overline{\bold A(\bold T)} \neq \emptyset$, if a point $p
\in \overline{\bold A(\bold T)}$ corresponds to a complete
hyperbolic structure on $M$, then $p$ is the maximum point of the
volume function $vol: \overline{\bold A(\bold T)} \to \bold R$.
Furthermore, the point $p$ is the unique maximum volume point in
$\overline{\bold A(\bold T)}$.
\end{theorem}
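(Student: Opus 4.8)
The plan is to prove the stronger statement that $vol(q)\le vol(p)$ for every $q\in\overline{\bold A(\bold T)}$, with equality if and only if $q=p$; this yields both assertions at once. Throughout, write $\Lambda(\theta)=-\int_0^\theta\log|2\sin t|\,dt$ for the Lobachevsky function, so that an angled tetrahedron with angles $\alpha+\beta+\gamma=\pi$ has volume $V(\alpha,\beta,\gamma)=\Lambda(\alpha)+\Lambda(\beta)+\Lambda(\gamma)$. A Hessian computation shows $V$ is strictly concave on the open $2$--simplex and, by continuity, concave on its closure; hence $vol$, a sum of such functions restricted to an affine subspace, is concave on the compact convex polytope $\overline{\bold A(\bold T)}$, and its maximum is attained. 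The aim is to show it is attained only at $p$.

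Split the tetrahedra of $\bold T$ according to whether they are flat at $p$, and for $q\in\overline{\bold A(\bold T)}$ write
\[
vol(q)-vol(p)=\sum_{i\text{ nonflat}}\bigl(V_i(q_i)-V_i(p_i)\bigr)+\sum_{i\text{ flat}}V_i(q_i),
\]
using $V_i(p_i)=0$ when $\Delta_i$ is flat. When $\Delta_i$ is nonflat, $p_i$ lies in the open simplex, so strict concavity gives $V_i(q_i)-V_i(p_i)\le\ell_i(q_i-p_i)$ with equality iff $q_i=p_i$, where $\ell_i(u):=\langle\nabla V_i(p_i),u\rangle=-\sum_j\log\sin(p_{ij})\,u_j$ (the additive $\log 2$ drops because $\sum_j u_j=0$). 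When $\Delta_i$ is flat, the complete structure assigns $\Delta_i$ a genuine real cross--ratio; relabelling the edge pairs so that the angle $\pi$ sits at the first pair and that pair carries the cross--ratio $z_i$, one has $z_i<0$, and the correct substitute for the (now singular) gradient is the linear functional
\[
\ell_i(q_i-p_i):=\log(1-z_i)\,q_{i2}+\log\tfrac{1-z_i}{-z_i}\,q_{i3},
\]
which arises as the limit of $-\sum_j\log\sin(\theta_j)\,u_j$ along a family of nonflat tetrahedra degenerating to $\Delta_i$ with cross--ratios tending to $z_i$.

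The key new estimate is that, for a flat $\Delta_i$ and any $q_i$ in the closed simplex,
\[
V_i(q_i)=\Lambda(q_{i2})+\Lambda(q_{i3})-\Lambda(q_{i2}+q_{i3})\ \le\ \ell_i(q_i-p_i),
\]
with equality iff $q_i=p_i$. Setting $G_z(a,b)=\log(1-z)\,a+\log\tfrac{1-z}{-z}\,b-\Lambda(a)-\Lambda(b)+\Lambda(a+b)$, this amounts to $G_z\ge0$ on $\{a,b\ge0,\ a+b\le\pi\}$ with equality only at the origin, which one verifies by checking that $G_z$ has no critical point in the interior and is nonnegative, vanishing only at $(0,0)$, on each of the three boundary segments; infinitesimally near the origin this is the statement that the binary entropy function lies below its tangent line at $s=1/(1-z)$. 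I expect this flat--tetrahedron inequality to be the main obstacle, since it is exactly where the logarithmic blow--up of $\nabla V_i$ at a flat tetrahedron must be controlled and the hidden real shape parameter brought into play.

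Summing the two families of inequalities gives $vol(q)-vol(p)\le\sum_i\ell_i(q_i-p_i)$, so it remains to show the right side vanishes for every $q\in\overline{\bold A(\bold T)}$. This is the Casson--Rivin identification of the complete structure with a critical point of volume, now carried out at a boundary point of $\overline{\bold A(\bold T)}$: reorganising the sum edge by edge and using the relations $\sum_j u_{ij}=0$ and $\sum_{c\text{ around }e}u_c=0$ valid for $u=q-p$ (since $p,q\in\overline{\bold A(\bold T)}$), one finds $\sum_i\ell_i(u)$ is governed by the real parts of Thurston's edge equations together with the completeness equations at the cusps, all of which hold because $p$ comes from a complete hyperbolic structure; the flat tetrahedra enter only through the real parameters $z_i$, which is why those had to be built into $\ell_i$. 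Hence $vol(q)\le vol(p)$, so $p$ maximises $vol$; and if $vol(q)=vol(p)$ then equality holds in every summand, forcing $q_i=p_i$ for each nonflat $\Delta_i$ (strict concavity) and for each flat $\Delta_i$ (the equality case of $G_{z_i}\ge0$), so $q=p$. Thus $p$ is the unique maximum point.
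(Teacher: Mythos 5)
Your proposal is correct in outline, and it takes a genuinely different route from the paper even though it rests on the same two geometric inputs (concavity of the Lobachevsky volume, and the fact that the flat tetrahedra of the complete structure carry real shape data satisfying the degenerate triangle relation). The paper restricts $vol$ to the segment $(1-t)p+tq$, proves strict concavity of $f(t)=vol((1-t)p+tq)$, and computes $\lim_{t\to0^+}f'(t)$; the flat tetrahedra then contribute entropy terms $a\ln|a|$, and the decisive estimate (Lemma 2.7) is that the binary entropy $(x+y)\ln(x+y)-x\ln x-y\ln y$ lies below the linear form $(c-a)x+(c-b)y$ when $e^{c}=e^{a}+e^{b}$ --- exactly the ``infinitesimal at the origin'' version of your inequality $\Lambda(a)+\Lambda(b)-\Lambda(a+b)\le\alpha a+\beta b$ with $e^{-\alpha}+e^{-\beta}=1$. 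You instead bound $vol(q)-vol(p)$ in one shot by a supporting linear functional, which buys you: no limit computation for $f'$ (the paper's Lemma 2.3 and the $0\ln 0$ bookkeeping disappear), and uniqueness read off directly from the equality cases of the per-tetrahedron inequalities rather than from strict concavity along the segment; the price is that your flat-tetrahedron inequality is a global two-variable statement rather than a tangent-line statement, though your boundary-plus-no-interior-critical-point argument does verify it (an interior critical point would force $\sin a+\sin b=\sin(a+b)$, which fails strictly). Your cross-ratio coefficients are literally the paper's decorated average lengths in disguise: $\log(1-z)=w_k-w_i$ and $\log\frac{1-z}{-z}=w_k-w_j$, and the condition $e^{-\alpha}+e^{-\beta}=1$ is the triangle equality $e^{w_k}=e^{w_i}+e^{w_j}$ of Lemma 2.5. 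The one place you assert rather than argue is the vanishing of $\sum_i\ell_i(q_i-p_i)$; this is true, but ``governed by the edge and completeness equations'' is not yet a proof, and the paper's Lemmas 2.4--2.6 (choose horospheres at the cusps, write $\ln|\sin p_i|=w_i-c(\sigma)$, kill the $c(\sigma)$ by $\sum_{i<\sigma}a_i=0$ and the $w_i$ by $\sum_{i>e}a_i=0$) supply exactly the missing mechanism and apply verbatim to your $\ell_i$.
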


In \cite{FG2}, Futer and Gueritaud found an example of $(M, \bold T)$ so
that $\bold A(\bold T) \neq \emptyset$ and the maximum point of $vol$ on $
\overline{\bold A(\bold T)}$ does not correspond to a complete
hyperbolic metric. (A similar example was also found by Luo-Zheng
\cite{LZ} by making five 2-3 moves on the standard triangulation of the
figure-8 knot complement). This shows that theorem 1.2
cannot be improved to be a necessary and sufficient condition. We
are curious if the uniqueness of the maximum volume on $\overline{\bold A(\bold T)}$ is the
sufficient condition for the completeness of the metric.

Theorem 1.2 will be proved in \S 2.

\subsection{Acknowledgement} We thank D. Futer, F. Gueritaud and S.
Tillmann for helpful conversations. The work is partially
supported by the NSF.

\section{Proof of Theorem 1.2}

Let us begin with some notations and conventions. Let $\bold
R_{>0} = \{ x \in \bold R | x >0\}$ and $\bold R_{ \geq 0} =\{ x
\in \bold R | x \geq 0\}$. Given a set $X$, let $\bold R^X =\{ f:
X \to \bold R\}$ be the vector space of all functions from $X$ to
$\bold R$. The function $x\ln x: (0, \infty) \to \bold R$ is
extended continuously to $[0, \infty) \to \bold R$ by setting $0
\ln 0=0$. Suppose $(M, \bold T)$ is an ideal triangulated
3-manifold so that $V$, $E$ and $T$ are the sets of all (ideal)
vertices, edges and tetrahedra. Let $I =\{ (e, \sigma) \in E
\times T| $  edge $e$ is adjacent to the tetrahedron $\sigma$\}.
An angle structure is a vector in the space $\bold R^I$ satisfying
a set of linear equations and linear inequalities. If $x \in \bold
R^I$, we use $x_i$ to denote $x(i)$. If $i=(e, \sigma) \in I$, we
use $i>e$ and $i < \sigma$ to indicate the incident relation.
Three distinct $i=(e_1, \sigma), j=(e_2, \sigma)$ and $k=(e_3,
\sigma)$ in $I$ so that $e_1, e_2, e_3$ are three edges from the
same vertex in $\sigma$ will be denoted by $\{i,j,k\} \in \Delta$.
Finally,  we use $(e, \sigma) \sim (e', \sigma)$ to indicate that
$e, e'$ are two opposite edges in the same tetrahedron $\sigma$.
Using these notations, we have

$$ A(\bold T) =\{ x \in \bold R^I | \quad (1), (2), (3)  \quad
\text{hold} \}
$$
where
\begin{enumerate}

\item if $\{i,j,k\} \in \Delta$, $x_i+x_j+x_k=\pi$;

\item if $e \in E$, $\sum_{ i \in I, i>e} x_i =2\pi$;

\item $x_i>0$ for all $i \in I$.

\end{enumerate}

Note that condition $(1)$ implies that $x_i = x_j$ when $ i \sim
j$.  The closure $\overline{\bold A(\bold T)}$ of $\bold A(\bold
T)$ is give by $\{ x \in \bold R_{\geq 0}^I | $ (1) and (2) hold
\}. Theorem 1.2 does not assume $\bold A(\bold T) \neq \emptyset$,
but it assumes $\overline{\bold A(\bold T)} \neq \emptyset$.
Elements in $\overline{\bold A(\bold T)}$ will be called \it
non-negative angle structures. \rm

Suppose $\sigma$ is an angled tetrahedron with three angles $x_1,
x_2, x_3$ at three edges from a vertex. Then the
Lobachevsky-Milnor volume formula says the volume $vol(\sigma)$ of
$\sigma$ is $\Lambda(x_1) + \Lambda(x_2) + \Lambda(x_3)$  where
$\Lambda(t) = -\int_0^t \ln| 2 \sin(u)| du$ is the Lobachevsky
function. The function $\Lambda(t)$ is continuous on $[0,\pi]$. In
particular, the volume function $vol: \overline{\bold A(\bold T)}
\to \bold R$ is give by \begin{equation}  vol(x) = \frac{1}{2}
\sum_{i \in I} \Lambda(x_i). \end{equation} Note that $vol(x)
=\sum_{\sigma \in T} vol(\sigma)$ where $\sigma$ is the angled
tetrahedron with angles given by $x$.

\subsection{ Set up the proof}
Assume that $p \in \overline{\bold A(\bold T)}$ corresponds to the
complete hyperbolic metric on $M$. If $p \in \bold A(\bold T)$, then
Casson-Rivin's theorem implies that $p$ is the maximum point of
the volume.  It remains to deal with $p \in \partial \bold A(\bold T) =
\overline{\bold A(\bold T)} -\bold A(\bold T)$.  Take $q \in \overline{\bold
A(\bold T)}$ so that $q \neq p$. The goal is to show that $vol(p) >
vol(q)$. To this end, let $f(t) = vol( (1-t)p + tq)$ for $t \in
[0, 1]$. We will show that $f(0) > f(1)$, i.e., $vol(p) > vol(q)$.

\begin{lemma}  The function $f(t)$ is concave in $[0,1]$ and is
strictly concave in the open interval $(0, 1)$.
%
\end{lemma}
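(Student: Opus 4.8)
The plan is to decompose $f$ into a sum of per-tetrahedron contributions and reduce the statement to a one-tetrahedron computation exploiting the relation $a+b+c=\pi$. Since the relations $(1)$ and $(2)$ cutting out $\overline{\bold A(\bold T)}$ are affine, the segment $x(t)=(1-t)p+tq$ lies in $\overline{\bold A(\bold T)}$ for all $t\in[0,1]$; in particular every coordinate $x_i(t)$ is affine in $t$ with values in $[0,\pi]$. By $(2.1)$, and because opposite edges of a tetrahedron carry equal angles, $f(t)=\sum_{\sigma\in T}h_\sigma(t)$ where, for each $\sigma$, the three pairs of opposite edges carry affine nonnegative angles $a=a_\sigma(t),b=b_\sigma(t),c=c_\sigma(t)$ with $a+b+c\equiv\pi$, and $h_\sigma(t)=\Lambda(a)+\Lambda(b)+\Lambda(c)=vol(\sigma)$. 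So it suffices to show that each $h_\sigma$ is concave on $[0,1]$ and that, when $q\neq p$, at least one $h_\sigma$ is strictly concave on $(0,1)$.

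Fix $\sigma$ and drop subscripts. A first observation: on $(0,1)$ any non-constant one of $a,b,c$ takes values in the \emph{open} interval $(0,\pi)$, since an affine nonnegative function vanishing at an interior point has a minimum there and is constant, and $a(t_0)=\pi$ at interior $t_0$ forces $b(t_0)=c(t_0)=0$, hence $b\equiv c\equiv 0$ and $a\equiv\pi$. Thus $h$ is smooth on $(0,1)$ with $h''=-\sum_\alpha \cot\alpha\cdot(\alpha')^2$, the sum over the non-constant angles $\alpha$ among $a,b,c$. When all three are non-constant they lie in $(0,\pi)$, and putting $c'=-(a'+b')$ gives $h''=-[(\cot a+\cot c)(a')^2+2\cot c\,a'b'+(\cot b+\cot c)(b')^2]$; the triangle identity $\cot a\cot b+\cot b\cot c+\cot c\cot a=1$ yields $(\cot a+\cot c)(\cot b+\cot c)-\cot^2c=1>0$, and $\cot a+\cot c=\sin b/(\sin a\sin c)>0$, so this quadratic form is positive definite and $h''<0$. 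The cases with one or two of the angles constant are handled the same way; the only degenerate situation is that one of $a,b,c$, say $c$, is identically $0$, and then $a+b\equiv\pi$ and in fact $h\equiv\Lambda(a)+\Lambda(\pi-a)\equiv 0$ because $\Lambda(s)+\Lambda(\pi-s)=-\int_0^{\pi}\ln|2\sin u|\,du=0$. In every case $h''\le 0$, with $h''<0$ throughout $(0,1)$ whenever $\sigma$ has at least two non-constant angles, none of which is identically $0$.

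Summing, $f$ is concave on $(0,1)$; and since $vol$ extends continuously to $\overline{\bold A(\bold T)}$, $f$ is continuous, hence concave, on $[0,1]$. For strict concavity on $(0,1)$ it is enough to produce one $\sigma$ with $h_\sigma''\not\equiv 0$, equivalently one $\sigma$ meeting the condition of the previous paragraph. Since $q\neq p$, some angle $x_i$ is non-constant; let $\sigma_0$ be its tetrahedron. If $p$ realizes $\sigma_0$ non-flat (all three angles in $(0,\pi)$ at $t=0$), then every non-constant angle of $\sigma_0$, being affine and interior at $t=0$, stays in $(0,\pi)$ on $(0,1)$, so $h_{\sigma_0}''<0$ there. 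If $p$ realizes $\sigma_0$ flat, its angles at $t=0$ are $0,0,\pi$; and $q\neq p$ at $\sigma_0$ makes either both $0$-angles become positive — whence all three angles are interior on $(0,1)$ and $h_{\sigma_0}''<0$ — or exactly one become positive, the degenerate case $h_{\sigma_0}\equiv 0$.

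The remaining point — which I expect to be the main obstacle — is to exclude the possibility that \emph{every} tetrahedron carrying a non-constant angle is of that last degenerate type; were this possible, $f$ would be constant on $[0,1]$ and the inequality $f(0)>f(1)$ sought in the proof of Theorem 1.2 would fail. On such a degenerate tetrahedron $\sigma$ the vector $q-p$ is a ``bump'': it adds some $b_\sigma>0$ to one of the $0$-angles of $\sigma$ and subtracts $b_\sigma$ from the $\pi$-angle, leaving the third angle unchanged. The edge relations $\sum_{i>e}(q_i-p_i)=0$, $e\in E$, then force these bumps to cancel around every edge of $M$, and I would argue that no such nonzero family of bumps can exist, using the edge relations together with the combinatorial pattern of the flat tetrahedra of the complete metric $p$ (this is presumably where completeness of $p$, not just conditions $(1)$ and $(2)$, is needed). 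Granting this, some $h_\sigma''<0$ throughout $(0,1)$, so $f$ is strictly concave on $(0,1)$, which with the concavity on $[0,1]$ completes the proof.
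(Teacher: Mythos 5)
Your computational core is correct and, in fact, more self-contained than the paper's: where you derive $h''=-\sum_\alpha\cot\alpha\,(\alpha')^2$ and check negativity via the identity $\cot a\cot b+\cot b\cot c+\cot c\cot a=1$, the paper simply cites Rivin's theorem that $\Lambda(t_1)+\Lambda(t_2)+\Lambda(t_3)$ is strictly concave on the open simplex $\{t_1+t_2+t_3=\pi,\ t_i>0\}$ and restricts to the segment. Your observation that every non-constant angle stays in $(0,\pi)$ for $t\in(0,1)$ is also right and is the same point the paper needs. The genuine gap is exactly the one you flag in your last paragraph: you do not exclude the possibility that every tetrahedron on which $q-p$ is nonzero is of the degenerate type (flat at $p$, with exactly one zero angle increasing and the $\pi$-angle decreasing). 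This is not an optional loose end: in that configuration $f$ is identically constant, so the lemma as stated would be \emph{false}; no amount of massaging the concavity computation can avoid ruling it out, and the edge equations alone do not obviously forbid such a ``bump'' pattern.

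For comparison, the paper takes a shorter route to strictness that sidesteps your case analysis: since $vol(p)>0$, some tetrahedron $\sigma$ is non-flat at $p$; its angles satisfy $(1-t)p_i+tq_i\ge(1-t)p_i>0$ for $t\in(0,1)$, and Rivin's strict concavity applied to that one summand finishes. (Strictly speaking this too tacitly assumes the angles of that particular $\sigma$ vary along the segment, so it does not dispose of your degenerate scenario either.) The clean way to close your gap is with the machinery the paper develops afterwards, and it confirms your guess that completeness of $p$ is the essential input: decorate the complete metric so each edge gets a length, set $w_i=\frac12(L(e)+L(e'))$, and use the edge equations (Lemma 2.2(2)) to get $\sum_{i\in I}a_iw_i=0$. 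On a degenerate flat tetrahedron the contribution to this sum is $2\delta(w_i-w_k)$ with $\delta>0$, where $w_k$ sits at the $\pi$-angle; Lemma 2.5 in its equality case gives $e^{w_k}=e^{w_i}+e^{w_j}>e^{w_i}$, hence $w_k>w_i$ and each contribution is strictly negative. So a nonzero $a$ supported entirely on degenerate tetrahedra would force $\sum_i a_iw_i<0$, a contradiction. With that supplement your argument is complete; without it, the proposal does not yet prove strict concavity.
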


\begin{proof}  By a result of Rivin \cite{Ri}, the volume function
$\Lambda(t_1) + \Lambda(t_2) + \Lambda(t_3)$ is strictly concave
in the set $\{ (t_1, t_2, t_3) \in \bold R^3 | t_1+t_2+ t_3 = \pi,
$ $t_i >0$ for $i=1,2,3$\}.  In particular, this implies that the
function $vol(x)$ is concave in $x \in \overline{\bold A(\bold T)}$.
Thus $f(t)$ is concave in $[0, 1]$. To see the strictly concavity,
note that $vol(p) >0$ since it is the volume of a complete
hyperbolic structure. In particular, there is tetrahedron $\sigma
\in T$ so that its angles in $p$ are all positive. This implies
that for $t \in (0, 1)$, the angles of $\sigma$ in $(1-t)p+tq$ are
positive. By Rivin's theorem, the volume $\sum_{ i \in I, i <
\sigma} \Lambda((1-t)p_i + tq_i)$ is strictly concave in $t \in
(0, 1)$.  Since $f(t)$ is the sum of concave functions in $t$ so
that one of then is strictly concave, it follows that $f(t)$ is
strictly concave in $(0, 1)$.
\end{proof}

For $t \in (0, 1)$, by the definition of the volume (2.1), we have

\begin{equation} f'(t) =-\frac{1}{2} \sum_{ i \in I} (q_i-p_i) \ln |2 \sin
((1-t)p_i + t q_i)|
\end{equation}
Note that we have used the convention that $0 \ln 0=0$ in (2.2).
Indeed, if $p_i = q_i$ is $0$ or $\pi$, then the term in (2.2)
corresponding to $i$ is defined to be $0$.  (This is due to the
fact that $\Lambda(x) + \Lambda(y) + \Lambda(\pi-x-y) =0$ if $x
\in \pi \bold Z$. )

The goal is to show that
\begin{equation}
\lim_{ t \to 0^+} f'(t)  \leq 0.
\end{equation}
Note that (2.3) and lemma 2.1 imply that $f(0) > f(1)$. In the
rest of the subsections, we will focus on proving (2.3).

\subsection{}
Let $J =\{ i \in I|  p_i =0 $ or $p_i =\pi$\}. Note that if
$(e,\sigma) \in J$, then $(e', \sigma) \in J$ for all other edges
$e'$ in $\sigma$ by the definition of flat tetrahedron, i.e., all
its dihedral angles are $0$ or $\pi$.  Let $a = q-p \in \bold
R^I$.

\begin{lemma}

\begin{enumerate}
\item If $\{i,j,k \} \in \Delta$, then $a_i+a_j+a_k =0$, i.e., for
each tetrahedron $\sigma$, $\frac{1}{2}\sum_{ i< \sigma} a_i =0$.

\item  For each edge $e \in E$, $\sum_{ i > e} a_i =0$.

\item   $\sum_{i \in I} a_i = 0$.

\end{enumerate}
\end{lemma}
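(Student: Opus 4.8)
The plan is to deduce all three assertions directly from the fact that $p$ and $q$ both lie in $\overline{\bold A(\bold T)}$, which, by the description of the closure recorded above, means precisely that each of $p$ and $q$ satisfies conditions (1) and (2) in the definition of $\bold A(\bold T)$ (with the strict inequality (3) relaxed to $x_i \geq 0$). Part (1) of the lemma is then immediate: for $\{i,j,k\} \in \Delta$ we have $p_i + p_j + p_k = \pi$ and $q_i + q_j + q_k = \pi$, so subtracting gives $a_i + a_j + a_k = 0$. For the reformulation $\frac{1}{2}\sum_{i < \sigma} a_i = 0$, I would sum the identity $a_i + a_j + a_k = 0$ over the four vertices of $\sigma$; since each of the six edges of $\sigma$ joins exactly two of its vertices, every $i < \sigma$ is counted exactly twice, giving $2\sum_{i < \sigma} a_i = 0$.

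Part (2) is the same subtraction applied to condition (2): for $e \in E$ we have $\sum_{i > e} p_i = 2\pi = \sum_{i > e} q_i$, hence $\sum_{i > e} a_i = 0$.

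Part (3) follows by summing part (2) over all edges $e \in E$. Since every element $i = (e', \sigma) \in I$ is incident to a unique edge, namely $e'$, the double sum $\sum_{e \in E}\sum_{i > e} a_i$ collapses to $\sum_{i \in I} a_i$, which is therefore $0$. (Alternatively, one may sum the "in particular" form of part (1) over all $\sigma \in T$, since each $i \in I$ belongs to a unique tetrahedron.)

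There is no substantial obstacle here: the lemma is a routine linear-algebra consequence of the defining linear relations for $\overline{\bold A(\bold T)}$. The only thing to watch is the combinatorial bookkeeping — that each edge of a tetrahedron lies on exactly two of its vertices (used for the factor $2$ in part (1)), and that each element of $I$ is incident to exactly one edge of $\bold T$ and lies in exactly one tetrahedron (used in part (3)). These hold because $I$ records $(\text{edge}, \text{tetrahedron})$ incidences and the edges and vertices of a tetrahedron form the $1$-skeleton of a $3$-simplex.
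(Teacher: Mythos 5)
Your proof is correct and follows essentially the same route as the paper: both parts (1) and (2) come from subtracting the defining linear relations satisfied by $p$ and $q$ in $\overline{\bold A(\bold T)}$, and part (3) follows by summing (the paper sums part (1) over all tetrahedra and divides by 2, which is exactly your parenthetical alternative; your primary route of summing part (2) over edges is an equally valid one-line variant). The combinatorial bookkeeping you spell out (each edge of a tetrahedron meeting exactly two vertices, each element of $I$ lying in a unique tetrahedron) is exactly what the paper leaves implicit.
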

Indeed, the first two conditions follows from the definition of
angle structures $(1)$ and $(2)$. The last condition follows
from part (1) by summing over all tetrahedra and then divided by
2.

By lemma 2.2(3), we can rewrite $f'(t)$ in (2.2) as
\begin{equation}
f'(t) = - \frac{1}{2}\sum_{i \in I} a_i \ln | \sin( (1-t) p_i + t
q_i)|
\end{equation}

 The following was proved in \cite{Lu1}. It can also be found in \cite{FG2}.

\begin{lemma}
\begin{equation}
\lim_{t \to 0^+} f'(t) = -\frac{1}{2}(\sum_{i \notin J} a_i \ln
|\sin(p_i)| - \sum_{i \in J} a_i \ln | a_i|).
\end{equation}
\end{lemma}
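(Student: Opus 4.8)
The plan is to analyze the sum in (2.4) by splitting the index set $I$ into $J$ and $I\setminus J$ and tracking the $t\to 0^+$ behavior of each term. For $i\notin J$ we have $p_i\in(0,\pi)$, so $(1-t)p_i+tq_i \to p_i$ with $\sin p_i \neq 0$, and the term $-\tfrac12 a_i \ln|\sin((1-t)p_i+tq_i)|$ converges to $-\tfrac12 a_i \ln|\sin p_i|$ by continuity; these terms contribute the first sum in (2.5). The real content is the $i\in J$ part. First I would observe that if $p_i=0$ then $(1-t)p_i+tq_i = t q_i = t(p_i+a_i) = t a_i$, and since $q\in\overline{\bold A(\bold T)}$ forces $q_i\ge 0$, in fact $a_i\ge 0$; similarly if $p_i=\pi$ then $(1-t)p_i+tq_i = \pi + t a_i$ with $a_i = q_i-\pi \le 0$, so $\sin((1-t)p_i+tq_i) = -\sin(t a_i)$ and $|\sin((1-t)p_i+tq_i)| = |\sin(t a_i)| = \sin(t|a_i|)$. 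In either case $|\sin((1-t)p_i+tq_i)| = \sin(t|a_i|)$ for small $t>0$ (interpreted as $0$ when $a_i=0$, consistent with the $0\ln 0 = 0$ convention).

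Next I would substitute this into the $J$-part of (2.4) and use the estimate $\ln\sin(t|a_i|) = \ln(t|a_i|) + \ln\!\frac{\sin(t|a_i|)}{t|a_i|} = \ln t + \ln|a_i| + o(1)$ as $t\to 0^+$, valid whenever $a_i\neq 0$. Hence
\begin{equation*}
-\frac12 \sum_{i\in J} a_i \ln|\sin((1-t)p_i+tq_i)| = -\frac12 (\ln t)\sum_{i\in J} a_i - \frac12 \sum_{i\in J} a_i \ln|a_i| + o(1).
\end{equation*}
The term $-\tfrac12(\ln t)\sum_{i\in J}a_i$ is the one that could blow up, so the key step is to show $\sum_{i\in J}a_i = 0$. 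This is where I expect the main obstacle to lie, and it is exactly what the structure of $J$ is designed to give: $J$ is a union of full tetrahedra (if one edge of $\sigma$ lies in $J$, all six do, by the remark preceding Lemma 2.3), so by Lemma 2.2(1) the partial sum $\sum_{i<\sigma} a_i = 0$ for each flat $\sigma$, and summing over all flat tetrahedra and dividing by $2$ gives $\sum_{i\in J} a_i = 0$. With the singular term killed, letting $t\to 0^+$ yields
\begin{equation*}
\lim_{t\to 0^+} f'(t) = -\frac12\Big(\sum_{i\notin J} a_i \ln|\sin p_i| - \sum_{i\in J} a_i \ln|a_i|\Big),
\end{equation*}
which is (2.5).

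One point that needs care is the bookkeeping of the $o(1)$ error terms and the terms with $a_i = 0$: for $i\in J$ with $a_i=0$ the corresponding summand in (2.4) is identically $0$ under the convention $0\ln 0 = 0$, and it contributes $0$ to the right side of (2.5) as well, so such indices may simply be discarded from both sides at the outset. After that reduction every remaining $i\in J$ has $a_i\neq 0$, the expansion $\ln\sin(t|a_i|) = \ln t + \ln|a_i| + o(1)$ applies term-by-term over a finite index set, and the argument goes through cleanly. Since this lemma is cited as already proved in \cite{Lu1} and \cite{FG2}, the above is a self-contained reconstruction rather than a new argument.
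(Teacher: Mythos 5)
The paper offers no proof of this lemma --- it is simply cited to \cite{Lu1} and \cite{FG2} --- and your reconstruction follows the natural route: split (2.4) into $i \in J$ and $i \notin J$, observe that for $i \in J$ one has $|\sin((1-t)p_i+tq_i)| = \sin(t|a_i|)$ (with $a_i \ge 0$ when $p_i=0$ and $a_i \le 0$ when $p_i = \pi$), expand $\ln\sin(t|a_i|) = \ln t + \ln|a_i| + o(1)$, and kill the divergent $\ln t$ term by showing $\sum_{i\in J} a_i = 0$, which holds because $J$ is a union of whole tetrahedra and Lemma 2.2(1) applies to each. Identifying that cancellation as the crux is exactly right, and your handling of the $a_i = 0$ terms via the $0\ln 0 = 0$ convention is also correct.

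However, your last line does not follow from your penultimate display. Your expansion gives the $J$-contribution as $-\frac{1}{2}\sum_{i\in J} a_i \ln|a_i|$, hence
\[
\lim_{t\to 0^+} f'(t) = -\frac{1}{2}\Bigl(\sum_{i\notin J} a_i \ln|\sin p_i| \; + \; \sum_{i\in J} a_i \ln|a_i|\Bigr),
\]
with a \emph{plus} sign, whereas you then assert the formula with a minus sign in front of $\sum_{i\in J} a_i\ln|a_i|$ so as to match the printed statement. The two differ by $\sum_{i\in J} a_i\ln|a_i|$, which is not zero in general, so as written the final equality is a non sequitur. In fact your intermediate computation is the correct one: the printed lemma appears to carry a sign typo, as one can check against the paper's later rewriting of the limit as $\sum_{\sigma \text{ flat}}(-a_i\ln|a_i| - a_j\ln|a_j| - a_k\ln|a_k| + a_i w_i + a_j w_j + a_k w_k)$, which via (2.15) is equivalent to the plus-sign version and is what Lemma 2.7 is then applied to. You should keep the conclusion your expansion actually yields and flag the discrepancy with the stated formula, rather than silently flipping the sign to match it.
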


\subsection{Penner's decorated ideal simplexes}

To understand the right-hand-side of (2.5), we need a proposition
about the geometry of decorated ideal hyperbolic tetrahedra.
Following Penner \cite{Pe}, a \it decorated ideal n-simplex \rm is
an ideal hyperbolic n-simplex so that each vertex is assigned a
horosphere centered at the vertex.  If $\sigma$ is a decorated
ideal n-simplex and $e$ is an edge of it, the length $L(e)$ of $e$
is defined to be the signed distance between the two horospheres
centered at the end points of $e$ (the distance is negative if the
horospheres intersect). More precisely, suppose $p,p'$ are the two
points of intersection of $e$ with these two horocspheres. Then
$L(e)$ is $dist(p, p')$ if these two horospheres are disjoint and
is $-dist(p, p')$ if they intersect.

\begin{lemma}
Suppose $\sigma$ is a decorated ideal hyperbolic tetrahedron with
edge length $L(e)$ and dihedral angle $\theta(e)$ at the edge $e$.
Assume that $\theta(e) \in (0, \pi)$ for all edges. Then there is
a constant $c(\sigma)$ depending only on $\sigma$ so that for any
pairs of opposite edges $e, e'$ in $\sigma$,

\begin{equation}
\frac{1}{2} (L(e) + L(e')) = \ln |\sin (\theta(e))| + c(\sigma)
\end{equation}
\end{lemma}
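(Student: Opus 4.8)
The plan is to compute both edge lengths $L(e)$ and $L(e')$ explicitly in terms of the dihedral angles using the upper half-space model, and then verify that their average has the stated form. First I would place the ideal tetrahedron $\sigma$ in the upper half-space model $\{(z,h) : z \in \mathbb{C}, h > 0\}$ so that one vertex is at $\infty$ and the other three are at points $0, 1, w \in \mathbb{C}$, where $w$ is the \emph{shape parameter} of $\sigma$. The cross-ratio $w$ encodes the dihedral angles: the three dihedral angles at the edges emanating from $\infty$ (equivalently at the three pairs of opposite edges) are $\arg(w)$, $\arg\!\left(\frac{1}{1-w}\right)$, and $\arg\!\left(\frac{w-1}{w}\right)$, and these are the angles $\theta(e)$ for the three pairs of opposite edges $\{e,e'\}$. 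Since all $\theta(e) \in (0,\pi)$, the point $w$ lies in the open upper half-plane, so everything is well-defined.

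Next I would compute the signed distances. The horosphere at $\infty$ is a horizontal plane $\{h = t\}$ for some $t > 0$, and the horospheres at the finite vertices $0, 1, w$ are Euclidean spheres tangent to $\mathbb{C}$ at those points, of Euclidean diameters $r_0, r_1, r_w$ respectively. A standard computation in the upper half-space model gives, for the edge $e$ joining two finite vertices $v, v'$, that $L(e) = \ln\!\left(\frac{|v - v'|^2}{r_v r_{v'}}\right)$, and for the edge joining $\infty$ to a finite vertex $v$, that $L(e) = \ln\!\left(\frac{t^2}{r_v}\right)$ — these are the well-known Penner formulas. Now for a pair of opposite edges $\{e, e'\}$: one of them joins $\infty$ to one finite vertex, and the other joins the remaining two finite vertices. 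Adding the two log-expressions, the horosphere parameters $t, r_0, r_1, r_w$ all cancel in pairs (each appears exactly once across the three pairs of opposite edges, and in $\frac12(L(e)+L(e'))$ summed appropriately, or more directly one checks the cancellation for each pair individually after collecting the additive constant), leaving $\frac12(L(e)+L(e')) = \frac12 \ln(\text{something depending only on }w) + (\text{a common additive constant})$. The ``something'' will turn out to be $|w|^2$, $|1-w|^2$, or $\left|\frac{w}{1-w}\right|^2$ up to the common constant, and one identifies $\frac12\ln$ of it with $\ln|\sin\theta(e)|$ using the elementary identity $|\sin(\arg w)| = \frac{|\operatorname{Im} w|}{|w|}$ together with the fact that $\operatorname{Im} w = \operatorname{Im}(1-w)^{-1}\cdot|1-w|^2 = \ldots$, i.e., the three imaginary parts $\operatorname{Im} w$, $\operatorname{Im}\frac{1}{1-w}$, $\operatorname{Im}\frac{w-1}{w}$ are all positive and their ratios to the relevant moduli reproduce the sines; the leftover discrepancy is absorbed into $c(\sigma)$, which depends only on the shape $w$ and the choice of horospheres, hence only on $\sigma$.

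The main obstacle I anticipate is purely bookkeeping: getting the Euclidean diameters $r_v$ of the horospheres correct (they are determined by the decoration, not free once the horosphere is fixed) and tracking signs carefully so that the ``signed distance'' convention with possibly intersecting horospheres is respected — the formula $L(e) = \ln(|v-v'|^2/(r_vr_{v'}))$ already handles the sign automatically since the argument of the log drops below $1$ exactly when the horospheres meet, but one should say a word about why. A cleaner alternative, which I would use to avoid the diameter computation, is to invoke Penner's lambda-length formalism: define $\lambda(e) = e^{L(e)/2}$, recall Penner's formula expressing each $\lambda(e)$ as a product of a factor attached to each endpoint times a factor depending on the simplex shape, and observe that in the product $\lambda(e)\lambda(e')$ over a pair of opposite edges the endpoint factors pair up to give the \emph{same} product $\prod_{v}(\text{factor}_v)$ regardless of which pair is chosen — this is exactly the source of the shape-only dependence. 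Either route reduces the identity to the trigonometric identity $|\sin\theta| = \operatorname{Im}(\zeta)/|\zeta|$ for the appropriate cross-ratio $\zeta$, which is immediate. I would present the half-space computation as the main argument since it is self-contained, and remark that it also follows from Penner's work.
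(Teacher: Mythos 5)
Your proof is correct, but it takes a genuinely different route from the paper's. You work in explicit upper half-space coordinates: put the vertices at $\infty, 0, 1, w$, use the standard formulas for the signed inter-horosphere distances, observe that in each pair-sum $L(e)+L(e')$ every one of the four horosphere parameters occurs exactly once (so the decoration contributes the same additive constant for all three pairs of opposite edges), and identify the leftover shape term with $\ln|\sin\theta(e)|$ via $|\sin(\arg\zeta)|=\mathrm{Im}(\zeta)/|\zeta|$ together with the fact that $\mathrm{Im}(w)$, $|1-w|^2\,\mathrm{Im}\bigl(\tfrac{1}{1-w}\bigr)$ and $|w|^2\,\mathrm{Im}\bigl(\tfrac{w-1}{w}\bigr)$ are all equal, which is what makes the discrepancy a single constant $c(\sigma)$. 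The paper instead argues synthetically: it expresses $L(e)$ by Penner's cosine law as $-\tfrac{1}{2}\sum\ln a_i$ over the four horocyclic face angles adjacent to $e$, completes $L(e)+L(e')$ to the sum over all twelve face angles (a constant of $\sigma$) plus the four angles $b_i$ facing $e$ and $e'$, and then converts $\ln b_i$ into $\ln|\sin\theta(e)|$ by the Euclidean sine law applied to the vertex link triangles. Both proofs encode the same geometry; yours is more self-contained and computational (one small slip: the distance from the horosphere at $\infty$ at height $t$ to the horoball of diameter $r_v$ is $\ln(t/r_v)$, not $\ln(t^2/r_v)$, though this is harmless since $t$ enters every pair-sum with the same coefficient), while the paper's coordinate-free argument has the advantage of putting the Euclidean link triangle explicitly on stage --- the very object that is reused immediately afterwards in Lemma 2.5, where the side lengths $c\,e^{W(e_i)}$ and angles $\theta(e_i)$ of that triangle yield the inequality $e^{W(e_1)}+e^{W(e_2)}\geq e^{W(e_3)}$.
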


\begin{proof}
The proof is based on the cosine law for decorated ideal triangles
first discovered by Penner \cite{Pe}. Namely, give a decorated
ideal triangle of lengths $l_1, l_2, l_3$, the "angles" of the
triangle, denoted by $a_1, a_2, a_3$, are the lengths of the
portion of the horocycle inside the triangle. Indices are arranged
so that the angle $a_i$ is facing the edge of length $l_i$.  The
the cosine law says
\begin{equation}
l_i = -(\ln a_j + \ln a_k) \quad  \{i,j,k\} =\{1,2,3\}.
\end{equation}

For the edge $e$  (respectively $e'$), there are two face
triangles of the tetrahedron $\sigma$ having $e$ (resp. $e'$) as
an edge. These face triangles are naturally decorated ideal
hyperbolic triangles. Let $a_1, a_2, a_3, a_4$ (resp. $a_1'$,
$a_2'$, $a_3'$, $a_4'$) be the inner angles of these decorated
face ideal triangles so that $a_i$'s are adjacent to $e$ (resp.
$e'$). Let the rest of the four face angles (of the four decorated
ideal triangles) be $b_1, ..., b_4$. Here vertices of angles $b_i$
are either in $e$ or $e'$. Then by the cosine law, we have

\begin{equation}
L(e) = -\frac{1}{2} \sum_{i=1}^4 \ln a_i  \quad \text{and} \quad
L(e') =-\frac{1}{2} \sum_{i=1}^4 \ln a_i'
\end{equation}

This shows
$$ L(e) + L(e') = -\frac{1}{2} \sum_{i=1}^4 (\ln a_i + \ln a_i')
$$
$$=\frac{1}{2}c_0(\sigma) + \frac{1}{2} \sum_{i=1}^4 \ln b_i$$
where $c_0(\sigma) =  \sum_{i=1}^4 (\ln a_i + \ln a_i' + \ln b_i)$
is the sum over all twelve face angles.

Consider the Euclidean triangles obtained by intersecting the
horospheres with the ideal tetrahedron. The dihedral angles
$\theta(e)$'s are the inner angles and the face angles $b_i$'s are
the edge lengths of the Euclidean triangles.  Thus, by the Sine
law for Euclidean triangles, we can write
$$ \ln b_i = c_i(\sigma) + \ln |\sin (\theta(e)|$$
where $b_i$ has its vertex at $e$.  Putting these together and
using the fact that $\theta(e) = \theta(e')$, we obtain (2.6)
where $c(\sigma) =\frac{1}{2}\sum_{i=0}^4 c_i(\sigma)$.
\end{proof}

For a decorated ideal triangle of edge lengths $L(e)$, we define
the \it average edge length \rm of $e$ to be $W(e)
=\frac{1}{2}(L(e) + L(e'))$ where $e, e'$ are opposite edges.

\begin{lemma}
For a decorated ideal tetrahedron $\sigma$, if $e_1, e_2, e_3$ are
three edges from a vertex $v$, then
\begin{equation}
e^{W(e_1)} + e^{W(e_2)} \geq e^{W(e_3)}
\end{equation}
so that equality holds if and only if $\theta(e_3)=\pi$,
$\theta(e_1) =\theta(e_2) =0$.
\end{lemma}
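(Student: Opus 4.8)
The plan is to express the inequality (2.10) in terms of the dihedral angles using Lemma 2.5. By (2.6), for each edge $e$ of $\sigma$ we have $W(e) = \ln|\sin(\theta(e))| + c(\sigma)$ with the same constant $c(\sigma)$, so $e^{W(e)} = e^{c(\sigma)} |\sin(\theta(e))|$. Dividing through by the positive constant $e^{c(\sigma)}$, the desired inequality (2.10) becomes simply
\begin{equation}
|\sin(\theta(e_1))| + |\sin(\theta(e_2))| \geq |\sin(\theta(e_3))|,
\end{equation}
which must be proved for the three dihedral angles $\theta(e_1), \theta(e_2), \theta(e_3)$ at a common vertex of an ideal tetrahedron. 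The constraint coming from the geometry of an angled tetrahedron (condition (1)) is precisely $\theta(e_1) + \theta(e_2) + \theta(e_3) = \pi$ with each $\theta(e_j) \in [0,\pi]$; a priori Lemma 2.5 assumes $\theta(e) \in (0,\pi)$, but the average-length inequality (2.10) is stated for a general decorated ideal tetrahedron, so I would first handle the open case and then recover the boundary cases by continuity (noting that $W(e)$ may tend to $\pm\infty$, but $e^{W(e)}$ extends continuously to $e^{c(\sigma)}|\sin\theta(e)|$ on the closed simplex of angles).

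So the core is the elementary trigonometric claim: if $\alpha + \beta + \gamma = \pi$ with $\alpha,\beta,\gamma \in [0,\pi]$, then $\sin\alpha + \sin\beta \geq \sin\gamma$. First I would reduce using $\sin\gamma = \sin(\alpha+\beta) = \sin\alpha\cos\beta + \cos\alpha\sin\beta$. Then
\begin{equation}
\sin\alpha + \sin\beta - \sin\gamma = \sin\alpha(1 - \cos\beta) + \sin\beta(1 - \cos\alpha).
\end{equation}
Since $\alpha,\beta \in [0,\pi]$ we have $\sin\alpha \geq 0$, $\sin\beta \geq 0$, $1-\cos\alpha \geq 0$, $1-\cos\beta \geq 0$, so the right-hand side is a sum of two nonnegative terms and is therefore $\geq 0$. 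This gives (2.10).

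For the equality case: the expression above vanishes if and only if both $\sin\alpha(1-\cos\beta) = 0$ and $\sin\beta(1-\cos\alpha) = 0$. Analyzing the cases, $\sin\alpha = 0$ forces $\alpha \in \{0,\pi\}$, and similarly for $\beta$; combined with $1-\cos\alpha=0$ or $1-\cos\beta=0$ forcing the corresponding angle to be $0$, one checks that the only solutions with $\alpha+\beta+\gamma=\pi$ are $\alpha = \beta = 0$, $\gamma = \pi$ (the case where $e_3$ is the edge with dihedral angle $\pi$ and the other two dihedral angles at $v$ vanish). This matches the stated equality condition. The main obstacle — really the only delicate point — is the bookkeeping needed to pass from the open-angle hypothesis of Lemma 2.5 to the closed simplex, i.e. justifying that $e^{W(e)}$ extends continuously with the clean formula $e^{c(\sigma)}|\sin\theta(e)|$ even where some $\theta(e) \in \{0,\pi\}$ and the individual lengths $L(e)$ blow up; once that continuity is in hand, the inequality and its equality case follow from the two-line trigonometric identity above.
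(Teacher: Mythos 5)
Your argument for the non-degenerate case (all $\theta(e_i)\in(0,\pi)$) is fine and is essentially the paper's argument in trigonometric rather than geometric clothing: the paper considers the Euclidean link triangle cut out by the horosphere at $v$, whose inner angles are the $\theta(e_i)$ and whose side lengths are proportional to $e^{W(e_i)}$, and invokes the triangle inequality; your identity $\sin\alpha+\sin\beta-\sin(\alpha+\beta)=\sin\alpha(1-\cos\beta)+\sin\beta(1-\cos\alpha)$ is the law-of-sines translation of that. But there is a genuine gap exactly where the lemma is actually needed, namely for flat tetrahedra. Your proposed continuous extension $e^{W(e)}=e^{c(\sigma)}\,\lvert\sin\theta(e)\rvert$ is false at a flat tetrahedron: there the four vertices are distinct points at infinity, the edges are honest geodesics, so every $L(e)$ and hence every $W(e)$ is a \emph{finite} real number and $e^{W(e)}>0$, while $\sin\theta(e)=0$ for every edge. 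What actually happens is that $c(\sigma)\to+\infty$ as the tetrahedron degenerates, so the product $e^{c(\sigma)}\sin\theta(e)$ has a positive limit; the individual factors do not extend continuously, and Lemma 2.4 genuinely requires $\theta(e)\in(0,\pi)$. Consequently your reduction turns the flat case into the vacuous statement $0+0\geq 0$, which says nothing about the true, positive quantities $e^{W(e_i)}$.

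This matters because the equality case is the whole point: the paper applies the lemma to flat tetrahedra to conclude $e^{w_k}=e^{w_i}+e^{w_j}$ when $p_k=\pi$, $p_i=p_j=0$, and your argument cannot produce that (nor can it show the inequality is \emph{strict} when instead $\theta(e_1)=\pi$, which is what makes the "equality iff $\theta(e_3)=\pi$" statement asymmetric in the three edges). Separately, your case analysis of the trigonometric equality is also incorrect: $\sin\alpha+\sin\beta=\sin\gamma$ with $\alpha+\beta+\gamma=\pi$ holds whenever $\alpha=0$ (any $\beta$), e.g.\ $(0,\pi/2,\pi/2)$, not only at $(0,0,\pi)$; such configurations are excluded only by the geometric fact that a degenerate ideal tetrahedron must have angles $0,0,\pi$, which you never invoke. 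The fix is to argue as the paper does, directly with the link triangle: its side lengths are the horocyclic arcs $b_i$, which by Penner's cosine law on the (still non-degenerate) faces equal $c\,e^{W(e_i)}>0$ even when $\sigma$ is flat; the flat case is then the degenerate triangle, where the side opposite the angle-$\pi$ vertex equals the sum of the other two.
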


Indeed, consider the Euclidean triangle obtained by intersecting
the horosphere centered at the vertex $v$ with the ideal
tetrahedron. The inner angles of the Euclidean triangle are
$\theta(e_i)$'s and the edge lengths of it are $R \sin(\theta
(e_i))$ where $R$ is the radius of the circumcircle.  Now by lemma
2.4 that $\sin(\theta(e_i)) = c' e^{W(e_i)}$, the lengths of the
edges in the triangle are  $c e^{W(e_i)}$ for some constant $c$.
Thus the lemma follows from the triangular inequality for edge
lengths of triangles.

\subsection{A proof of theorem 1.2}
Recall that  the maximum volume point $p \in \overline{\bold
A(\bold T)}$ corresponds to a complete hyperbolic metric, i.e.,
there exists a geodesic triangulation of a complete hyperbolic
metric on $M$ so that the triangulation is isotopic to $T$ and the
dihedral angles coincide with the angles given by $p$. Choose
small horospheres at the cusp ends of $M$ so that each tetrahedron
becomes an ideal decorated hyperbolic tetrahedron.  In particular,
each edge $e$ in $\bold T$ has a the edge length $L(e)$ (in the
decorated tetrahedra). For each $i=(e, \sigma) \in I$, we define
the average length $w_i$ (of $e$ in $\sigma$) to be
\begin{equation} w_i =\frac{1}{2} (L(e) + L(e'))
\end{equation} where $e, e'$ are opposite edges in $\sigma$.

\begin{lemma}  We have
\begin{equation}
\sum_{ i \notin J} a_i \ln |\sin(p_i)| =\sum_{ i \notin J} a_i
w_i.
\end{equation}
\end{lemma}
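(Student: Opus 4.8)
The plan is to establish (2.12) one tetrahedron at a time: Lemma 2.4 rewrites each $\ln|\sin(p_i)|$ as the average length $w_i$ plus a constant depending only on the tetrahedron containing $i$, and these per-tetrahedron constants are then annihilated by the linear relation of Lemma 2.2(1).

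First I would pin down the index set appearing on both sides of (2.12). By the remark following the definition of $J$ in \S2.3, if one edge of a tetrahedron $\sigma$ has $p$-angle in $\{0,\pi\}$ then all six of its edges do; equivalently, every $\sigma\in T$ is either \emph{flat}, with $p_i\in\{0,\pi\}$ for all $i<\sigma$, or \emph{non-flat}, with $p_i\in(0,\pi)$ for all $i<\sigma$. Hence $I\setminus J$ is the disjoint union, over the non-flat tetrahedra $\sigma$, of the six-element sets $\{i\in I : i<\sigma\}$, and each such $\sigma$ meets the hypothesis $\theta(e)\in(0,\pi)$ of Lemma 2.4, because in the decoration fixed in \S2.5 the dihedral angles of $\sigma$ equal the corresponding values $p_i$. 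For a non-flat $\sigma$, Lemma 2.4 applied to each pair of opposite edges then yields a constant $c(\sigma)$ with $w_i=\ln|\sin(p_i)|+c(\sigma)$ for every $i<\sigma$. Grouping the left-hand side of (2.12) by tetrahedron gives
\begin{equation*}
\sum_{i\notin J} a_i\ln|\sin(p_i)|=\sum_{\sigma\text{ non-flat}}\;\sum_{i<\sigma}a_i\bigl(w_i-c(\sigma)\bigr)=\sum_{\sigma\text{ non-flat}}\Bigl(\sum_{i<\sigma}a_iw_i-c(\sigma)\sum_{i<\sigma}a_i\Bigr),
\end{equation*}
and since $\sum_{i<\sigma}a_i=0$ for every tetrahedron $\sigma$ by Lemma 2.2(1), the terms $c(\sigma)\sum_{i<\sigma}a_i$ vanish and the right-hand side collapses to $\sum_{\sigma\text{ non-flat}}\sum_{i<\sigma}a_iw_i=\sum_{i\notin J}a_iw_i$, which is (2.12).

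I do not expect a genuine obstacle; the argument is essentially bookkeeping around Lemma 2.4. The two points requiring care are (i) the flat/non-flat dichotomy, which ensures Lemma 2.4 applies to exactly the tetrahedra whose edges index the sums in (2.12), and (ii) the use of a single global choice of horospheres from \S2.5, so that each $L(e)$, and hence each $w_i$, is well defined independently of the tetrahedron, and the constants $c(\sigma)$ cancel cleanly after Lemma 2.2(1) is invoked.
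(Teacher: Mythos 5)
Your proposal is correct and follows essentially the same route as the paper: apply Lemma 2.4 to each non-flat tetrahedron to write $w_i=\ln|\sin(p_i)|+c(\sigma)$, then sum against $a_i$ and cancel the constants via Lemma 2.2(1). The extra care you take with the flat/non-flat dichotomy (so that $I\setminus J$ is exactly the union of the edge-indices of non-flat tetrahedra) is the same observation the paper records just after defining $J$.
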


\begin{proof}  A tetrahedron  $\sigma$ is called flat (in $p$) if its dihedral angles in $p$ are $0, 0, \pi$, i.e., there is
$i \in I$ with $i < \sigma$. If $\sigma$ is not flat, then by
lemma 2.4, there is a constant $c(\sigma)$ so that (2.6) holds for
each pair of opposite edges $e, e'$.  This is the same as
\begin{equation}
w_i = \ln | \sin(p_i)| + c(\sigma)
\end{equation} for $i < \sigma$.
 Multiply (2.12) by $a_i$  and sum over all not flat tetrahedra, we
 obtain
\begin{equation}
\sum_{ i \notin J} a_i w_i =  \sum_{ i \notin J} a_i \ln
|\sin(p_i)| + \sum_{i \notin J, i < \sigma }  a_i c(\sigma).
\end{equation}

But
$$ \sum_{i \notin J, i < \sigma }  a_i c(\sigma)
= \sum_{ \sigma {\text{ not flat}}}  c(\sigma)( \sum_{ i < \sigma}
a_i )=0$$ due to lemma 2.2 (1).  This ends the proof.
\end{proof}

On the other hand, we have
\begin{equation}
\sum_{ i \in I}  a_i w_i =0.
\end{equation}
Indeed,  if $i=(e, \sigma)$ and $j=(e', \sigma)$ where $e, e'$ are
opposite edges in $\sigma$, then $a_i = a_j$ and $w_i = w_j$.
Furthermore by (2.10), $a_i w_i + a_j w_j = a_i L(e) + a_j L(e')$.
Thus
$$ \sum_{ i \in I} a_i w_i = \sum_{i \in I} a_i ( \sum_{ e < i}
L(e))$$
$$=\sum_{ e \in E} L(e) (\sum_{ i < e} a_i)$$
$$=0$$ due to
lemma 2.2 (2).

By combining (2.11) and (2.14), we obtain
\begin{equation}
\sum_{ i \notin J} a_i \ln |\sin(p_i)|=- \sum_{ i \in J} a_i w_i.
\end{equation}

By (2.15), we can rewrite (2.5) as,
$$\lim_{t \to
0^+} f'(t) = -\frac{1}{2}(\sum_{ i \in J} a_i \ln |a_i| + \sum_{ i
\in J} a_i w_i)$$
$$
=\sum_{ \sigma \text{  is flat}} (-a_i \ln |a_i| -a_j \ln |a_j| -a_k
\ln |a_k| + a_iw_i+a_j w_j +a_k w_k)
$$
where $i,j,k < \sigma$, $\{i,j,k\} \in \Delta$.

Since $\sigma$ is flat, we may assume that $p_k=\pi$, $p_i=p_j=0$.
Then by lemma 2.5, three average lengths $w_i, w_j, w_k$ satisfy
the triangular equality, i.e., $e^{w_k} = e^{w_i} + e^{w_j}$.
Furthermore, $a_i \geq 0$, $ a_j \geq 0$ and $a_i+a_j+a_k=0$.

We claim  \begin{equation}
-a_i \ln |a_i| -a_j \ln |a_j| -a_k \ln |a_k| + a_iw_i+a_j
w_j +a_k w_k \leq 0.
\end{equation}

Evidently, (2.16) implies that $\lim_{t \to 0^+} f'(t) \leq 0$.
Now (2.16) follows from the following simple lemma on a convex
function where we take $x=a_i, y=a_j, z=a_k, a=w_i, b=w_j, c=w_k$.

\begin{lemma} Suppose $x,y,a,b,c \in \bold R_{\geq 0}$,  $x+y+z=0$ and $e^c \geq e^a+e^b$. Then
\begin{equation}
-x\ln x -y\ln y -z \ln z +  ax+by+cz \leq 0.
\end{equation}
\end{lemma}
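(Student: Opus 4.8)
The plan is to reduce Lemma 2.8 to a one-variable optimization. Since $x+y+z=0$ and $x,y \geq 0$, we have $z = -(x+y) \leq 0$, so in the expression $-x\ln x - y\ln y - z\ln z + ax+by+cz$ the term $-z\ln z = -z\ln|z|$ (using the convention $0\ln 0 = 0$) is where the only sign subtlety lies; I will write $s = x+y = -z \geq 0$ throughout. First I would dispose of the degenerate case $s = 0$: then $x=y=z=0$ and the left side is $0$, so the inequality holds (with equality). So assume $s > 0$. I will then show the supremum of $g(x,y) := -x\ln x - y\ln y + s\ln s + ax + by - cs$ over the segment $\{x,y \geq 0,\ x+y = s\}$ is $\leq 0$, and actually I would first fix $s$ and maximize over the split of $s$ into $x+y$, then worry about $s$.

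The key computation is the partial derivative along the constraint $x+y=s$: with $y = s-x$,
\begin{equation}
\frac{d}{dx}\,g = -\ln x - 1 + \ln y + 1 + a - b = \ln\frac{y}{x} + a - b,
\end{equation}
which vanishes exactly when $y/x = e^{b-a}$, i.e. $x = s\,e^{a}/(e^a+e^b)$ and $y = s\,e^{b}/(e^a+e^b)$; this is a maximum since $g$ is concave in $x$ on the segment. Substituting back, a short simplification gives
\begin{equation}
g_{\max}(s) = s\ln\frac{s(e^a+e^b)}{s} \cdot(\text{wrong—recompute carefully}) \;=\; s\ln(e^a+e^b) - cs = s\big(\ln(e^a+e^b) - c\big).
\end{equation}
(More precisely: $-x\ln x - y\ln y + ax + by$ at the optimal split equals $s\ln(e^a+e^b) - s\ln s$, so adding $s\ln s - cs$ yields $g_{\max}(s) = s(\ln(e^a+e^b) - c)$.) Now the hypothesis $e^c \geq e^a + e^b$ gives $c \geq \ln(e^a+e^b)$, hence $\ln(e^a+e^b) - c \leq 0$, and since $s \geq 0$ we conclude $g_{\max}(s) \leq 0$. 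This proves (2.17), with equality forced precisely when either $s = 0$ or $e^c = e^a + e^b$ together with $x,y$ at the optimal split (this matches the equality case propagated from Lemma 2.5).

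The main obstacle, and the only place requiring care, is the boundary behavior of the entropy terms: on the segment $x+y=s$ the function could a priori be maximized at an endpoint $x=0$ or $y=0$ rather than at the interior critical point. Here I would note that $\frac{d}{dx}g = \ln(y/x) + a - b \to +\infty$ as $x \to 0^+$ and $\to -\infty$ as $x \to s^-$, so the maximum is genuinely interior and the critical-point computation above is valid; the convention $0\ln 0 = 0$ makes $g$ continuous up to the endpoints, so no contribution is lost. The rest is the routine substitution recorded above. I should double-check the constant in the substitution step $-x\ln x - y\ln y + ax + by \big|_{\text{opt}} = s\ln(e^a+e^b) - s\ln s$, which follows from $x = s\lambda_a$, $y = s\lambda_b$ with $\lambda_a = e^a/(e^a+e^b)$, $\lambda_b = e^b/(e^a+e^b)$, $\lambda_a+\lambda_b=1$, and $-\lambda_a\ln\lambda_a - \lambda_b\ln\lambda_b + a\lambda_a + b\lambda_b = \ln(e^a+e^b)$ (the log-sum-exp identity), scaled by $s$ with the extra $-s\ln s$ coming from $-x\ln x - y\ln y = -s\ln s - s(\lambda_a\ln\lambda_a+\lambda_b\ln\lambda_b)$.
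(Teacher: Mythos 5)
Your proof is correct and follows essentially the same route as the paper: both arguments exploit concavity of the entropy terms along the segment $x+y=s$ and then invoke the hypothesis $e^c\ge e^a+e^b$ in logarithmic form. The only difference is packaging --- the paper normalizes $y=1$ and bounds $g(x)=(x+1)\ln(x+1)-x\ln x$ by its tangent line at $x_0=e^a/(e^c-e^a)$, whereas you compute the exact maximum over the split of $s$ via the log-sum-exp identity $\max_{\lambda_a+\lambda_b=1}(-\sum\lambda_i\ln\lambda_i+\sum a_i\lambda_i)=\ln(e^a+e^b)$, which has the minor advantage of making the equality case (proportional split with $e^c=e^a+e^b$) explicit, matching the paper's remark after the lemma.
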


We remark that if $e^c = e^a+e^b$, then the inequality becomes
equality for some non-zero $x, y, z$.

\begin{proof} Replacing $z=-x-y$, we obtain the equivalent form of (2.17) as
$$(x+y) \ln (x+y) - x \ln x - y \ln y \leq (c-a) x + (c-b) y.$$

The above inequality is homogeneous in $(x,y)$, i.e., it is
equivalent if we replace $(x,y)$ by $(\lambda x, \lambda y)$ where
$\lambda >0$. Thus we may assume further that $y=1$. Thus it
remains to prove,
\begin{equation}
(x+1)\ln(x+1) -x \ln x \leq (c-a) x + (c-b)
\end{equation} for all $x \geq 0$.
Let $g(x) =(x+1) \ln (x+1) -x\ln x$. Then $g'(x) = \ln( 1+
\frac{1}{x})$ and $g''(x) =-\frac{1}{x(1+x)}$. It follows that
$g''(x) \leq 0$, i.e., $g$ is concave in $[0, \infty)$. The
equation of the tangent line to $g$ at the point
$x_0=\frac{e^a}{e^c-e^a}$ is $y=h(x)$ where
$$  h(x) = (c-a) x + (c - \ln (e^c -e^a)).$$
Since $g$ is concave, we have $g(x) \leq h(x)$. Now use $b \geq
\ln (e^c-e^a)$, we obtain $h(x) \leq (c-a)x + (c-b)$. Thus lemma
follows.
\end{proof}

\bibliographystyle{amsalpha}

\end{document}